\theoremstyle{remark}
\newtheorem{example}{\textbf{Example}}[section]
\numberwithin{equation}{section}
\newcommand\figcaption{\def\@captype{figure}\caption}
\newcommand\tabcaption{\def\@captype{table}\caption}
\def\bq{\begin{equation}}
\def\eq{\end{equation}}
\def\bqs{\begin{equation*}}
\def\eqs{\end{equation*}}
\def\bsqs{\begin{subequations}}
\def\esqs{\end{subequations}}
\def\ba{\begin{aligned}}
\def\ea{\end{aligned}}
\def\br{\begin{eqnarray}}
\def\er{\end{eqnarray}}
\def\brr{\bq\begin{array}{rlll}}
\def\err{\end{array}\eq}
\def\text#1{\hbox{#1}}
\newtheorem{alg}{Algorithm}[section]
\newtheorem{thm}{Theorem}[section]
\newtheorem{lem}{Lemma}[section]
\newtheorem{rem}{Remark}[section]
\newcommand{\bsub}{\begin{subequations}}
\newcommand{\esub}{\end{subequations}$\!$}
\title[mixed DG without interior penalty]{
On the SAV-DG method for a class of fourth order gradient flows}
\author[H.~Liu, P. ~Yin]{Hailiang Liu$^\dagger$ and Peimeng Yin$^\S$}
\address{$^\ddagger$ Iowa State University, Department of Mathematics, Ames, IA 50011} \email{hliu@iastate.edu}
\address{$^\S$ Wayne State University, Department of Mathematics, Detroit, MI 48202} \email{pyin@wayne.edu}
\keywords{Gradient flows, energy dissipation, DG method, SAV approach.}
\subjclass{65N12, 65N30,  35K35}
\date{\today}
\begin{document}

\begin{abstract}
For a class of fourth order gradient flow problems, integration of the scalar auxiliary variable (SAV) time discretization with the penalty-free discontinuous Galerkin (DG) spatial discretization leads to SAV-DG schemes.
These schemes are linear and shown unconditionally energy stable. But the reduced linear systems are rather expensive to solve due to the dense coefficient matrices. In this paper,  we provide a procedure to pre-evaluate the auxiliary variable in the piecewise polynomial space. As a result the computational complexity of $O(\mathcal{N}^2)$ reduces to $O(\mathcal{N})$ when exploiting the conjugate gradient (CG) solver. This hybrid SAV-DG method is more efficient and able to deliver satisfactory results of high accuracy.  This was also compared with solving the full augmented system of the SAV-DG schemes.  
\end{abstract}

\maketitle

\bigskip


\section{Introduction} This paper is concerned with efficient numerical approximations to  a class of fourth order gradient flows \cite{FK99}:
\begin{align}\label{fourthPDE}
u_t  =  - \left(\Delta+\frac{a}{2}\right)^2 u -\Phi'(u) , \; x\in \Omega \subset \mathbb{R}^d, \; t>0, 
\end{align}
which governs the evolution of a scalar time-dependent unknown $u=u(x, t)$ in a convex bounded domain $\Omega\subset \mathbb{R}^d$, $\Phi$ is a nonlinear function and $a$ serves as a physical parameter. The model equation (\ref{fourthPDE}) describes important physical processes in nature. Typical application examples include the Swift-Hohenberg (SH) equation \cite{SH77} and the extended Fisher--Kolmogorov equation \cite{DS88, PT95}.

It is known that under appropriate boundary conditions,  equation (\ref{fourthPDE}) features a decaying free energy
\bq\label{engdisO}
\frac{d}{dt}\mathcal{E}(u) =  -\int_{\Omega}|u_t|^2dx \leq 0,
\eq
where
\bq\label{eqpdf}
\mathcal{E}(u) = \int_\Omega \frac{1}{2}\left(\mathcal{L} u \right)^2 + \Phi(u)dx, \quad \mathcal{L}=-\left(\Delta+\frac{a}{2}\right).
\eq
This energy dissipation law as a fundamental property of (\ref{fourthPDE}) is always desirable
for numerical approximations, and often crucial to eliminate numerical results that are not physical.

For the spatial discretization, we follow the penalty free discontinuous Galerkin (DG) method introduced in \cite{LY18}. The key idea is to introduce $q=\mathcal{L}u$ so that the resulting semi-discrete DG scheme
becomes
\begin{subequations}\label{SemiDG+}
\begin{align}
& (u_{ht}, \phi) = -A(q_h, \phi) - (\Phi'(u_h), \phi), \\
& (q_h, \psi)=A(u_h, \psi),
\end{align}
\end{subequations}
for all $\phi, \ \psi$ in the same DG space as for $u_h, q_h$.  Here $A(q_h, \cdot)$ is the DG discretization of $(\mathcal{L} q, \cdot)$. This spatial DG discretization avoids the use of penalty parameters (called penalty-free DG method) in the numerical flux on interior cell interfaces. It also inherits most of the advantages of the usual DG methods (see e.g. \cite{HW07, Ri08, Sh09}), such as high order accuracy, flexibility in hp-adaptation, capacity to handle domains with complex geometry.

In order to formulate an energy dissipative scheme with the time discretization, the linear terms  in (\ref{SemiDG+}) can be treated implicitly, but nonlinear terms have to be handled with care. The IEQ-DG method introduced  in \cite{LY19} is to integrate the DG method with the method of invariant energy quadratization (IEQ) \cite{Y16, ZWY17}. It boils down to solving an augmented system involving the dynamics of the auxiliary variable $U=\sqrt{\Phi(u_h)+B}$. We remark that the IEQ approach is remarkable as it allows one to construct linear, unconditionally energy stable schemes for a large class of gradient flows (see, e.g. \cite{Y16, YZW17, YZWS17, ZWY17, LY19, LY20}).
We refer the readers to \cite{LY19} for more references to earlier results on both the DG approximation and the time discretization.

As pointed out in \cite{LY19}, one could also integrate the same DG method with the so-called SAV approach  \cite{SXY18} by introducing an auxiliary variable
$
r=\sqrt{\int_\Omega\Phi(u(x, t))dx+B}.
$
This transforms (\ref{SemiDG+}) into another augmented system.
As for the IEQ-DG method, here one can also obtain a closed linear system for  $(u_h^{n+1}, q_h^{n+1})$ only.
Unfortunately, such systems involve dense coefficient matrices and rather expensive to solve.

There are two ways to get around this obstacle:  (i) find a path to lower the computational complexity of solving the reduced linear system; or  (ii) return to the full augmented system with $(u_h^{n+1}, q_h^{n+1}, r^{n+1})$ as unknowns.  For (i) we introduce a special procedure to pre-compute $r^{n+1}=r(t^{n+1})$ in the piecewise polynomial space based on a linear DG solver; with such obtained $r^{n+1}$, we solve the SAV-DG schemes with reduced computational cost. 
This treatment is interesting in its own sake. We name it the hybrid SAV-DG method. For (ii),  the full augmented system indeed involves only sparse coefficient matrices.  
Here the full system contains one more equation since $r$ does not depend on $x$.  In contrast,  the full system with $(u_h^{n+1}, q_h^{n+1}, U^{n+1})$ as unknowns for the IEQ-DG method contains $N(k+1)$ more equations. Here $N$ is the total number of the 1-D meshes, and $k$ the degree of DG polynomials. The advantage of the IEQ-DG method lies in the simplicity of its reduced system. 

Comparing the linear systems of the above three SAV-DG type-schemes, we see that the coefficient matrices are all symmetric,  but it is time-dependent and dense for the reduced system, time-dependent and sparse for the full augmented system, and time-independent and sparse for the hybrid SAV-DG. Indeed, our numerical tests confirm that the hybrid SAV-DG algorithm performs the best.

\subsection{Organization} This paper is organized as follows:  In Section 2, we formulate a unified semi-discrete DG method for the fourth order equation (\ref{fourthPDE}) subject to two different boundary conditions. In Section 3, we present SAV-DG schemes,  show the energy dissipation law, and discuss several ways to efficiently implement the schemes.  In Section 4, we provide a procedure to pre-evaluate the auxiliary variable and then present the according algorithms. In Section 5, we verify the good performance of the hybrid SAV-DG using  several numerical examples. Finally  some concluding remarks are given in Section 6. 

{\bf Notation:}   Throughout this paper, we use the notation $\Pi$ to indicate the usual piecewise $L^2$ projection in the sense of inner product with $\forall \phi \in V_h$,
$$
(\Pi w, \phi)=(w, \phi), \quad \forall \phi \in V_h,
$$
where $V_h$ is the discontinuous Galerkin finite element space.

\section{Spatial DG discretization } To introduce the hybrid SAV-DG  algorithm,  
we need to first recall some conventions about the semi-discrete DG discretization introduced in \cite{LY18}.
To be specific, we only consider homogeneous boundary conditions of form
\begin{equation}\label{BC}
\text{(i)} \ u \text{ is periodic};  \quad \text{or (ii)}   \ \partial_\mathbf{n} u=\partial_\mathbf{n} \Delta u=0, \quad x\in \partial \Omega,
\end{equation}
where $\mathbf{n}$ stands for the unit outward normal to the boundary $\partial \Omega$.

For the fourth order PDE  (\ref{fourthPDE}),  we set 
$q=\mathcal{L}u$ so that the model admits the following mixed form
\begin{equation}\label{mix}
\left \{
\begin{array}{rl}
    u_t = &- \mathcal{L} q-\Phi'(u),\\
    q = & \mathcal{L} u.
\end{array}
\right.
\end{equation}
Let the domain $\Omega$ be a union of  shape regular meshes $\mathcal{T}_h=\{K\}$, with the mesh size $h_K = \text{diam}\{K\}$ and $h=\max_{K} h_K$.  We denote the set of the interior interfaces by $\Gamma^0$,  the set of all boundary faces by $\Gamma^\partial$,  and the discontinuous Galerkin finite element space by
$$
V_h = \{v\in L^2(\Omega) \ : \ v|_{K} \in P^k(K), \ \forall K \in \mathcal{T}_h \},
$$
where $P^k(K)$ denotes the set of polynomials of degree no more than $k$ on element $K$. If the normal vector on the element interface $e\in \partial K_1 \cap \partial K_2$ is oriented from $K_1$ to $K_2$, then the average $\{\cdot\}$ and the jump $[\cdot]$ operator are defined by
$$
\{v\} = \frac{1}{2}(v|_{\partial K_1}+v|_{\partial K_2}), \quad [v]=v|_{\partial K_2}-v|_{\partial K_1}, 
$$
for any function $v \in V_h$,  where $v|_{\partial K_i} \ (i=1,2)$ is the trace of $v$ on $e$ evaluated from element $K_i$.
Then the DG method for (\ref{mix}) is to find $(u_h(\cdot, t), q_h(\cdot, t))\in V_h\times V_h  $ such that
\begin{subequations}\label{SemiDG}
\begin{align}
& (u_{ht}, \phi) = -A(q_h, \phi) - (\Phi'(u_h), \phi), \\
& (q_h, \psi)=A(u_h, \psi),
\end{align}
\end{subequations}
for all $\phi, \ \psi \in V_h$.  The initial data for $u_h$ is taken as the piecewise $L^2$ projection, denoted by  $u_h(x,0)=\Pi u_0(x)$.
In the above scheme formulation $A(q_h, \phi)$ is the DG discretization of $(\mathcal{L} q, \phi)$ and $A(u_h, \psi)$ is the DG discretizationvof  $(\mathcal{L} u, \psi)$.

The precise form of $A(\cdot, \cdot)$ depending on the types of boundary conditions is given as follows:
$$
A(w,v)=A^0(w,v)+A^b(w,v)
$$
with
\bq \label{A0}
A^0(w,v)= \sum_{K\in \mathcal{T}_h} \int_K \left( \nabla w \cdot \nabla v  - \frac{a}{2} w v \right)dx + \sum_{e\in \Gamma^0} \int_e \left( \{\partial_\nu w\}[v]+ [w]\{\partial_\nu v\} \right)ds.
\eq
Here $A^b(\cdot, \cdot)$ are given below for each respective type of boundary conditions:
\begin{subequations}\label{bd+}
\begin{align}
\text{for (i) of (\ref{BC}) } \qquad & A^b(w, v)=\frac{1}{2}  \int_{\Gamma^{\partial} } \left( \{\partial_\nu w\}[v]+ [w]\{\partial_\nu v\} \right)ds, \\
\text{for (ii) of (\ref{BC})} \qquad  &  A^b(w,v)=0.
\end{align}
\end{subequations}
Note that for periodic case in (\ref{bd+}a) the left boundary and the right boundary are considered as same, for which we use the factor $1/2$ to avoid recounting.

One can verify that the semi-discrete scheme (\ref{SemiDG}) satisfies a discrete energy dissipation law (see \cite{LY19})
 \bqs
\frac{d}{dt}\mathcal E(u_h,q_h) = -\int_{\Omega}|u_{ht}|^2dx \leq 0,
\eqs
where
\bq\label{oeq}
\mathcal E(u_h,q_h)=\int_{\Omega} \frac{1}{2}|q_h|^2 + \Phi(u_h) dx.
\eq
For non-homogeneous boundary conditions, it only requires a modification by adding some source terms in the DG formulation. Of course, the energy dissipation also needs to be refined to account for the boundary effects.

\section{Time discretization}\label{sec3}
With time discretization 
using the SAV approach (cf.  \cite{SXY18}), we introduce
 $$
 r=r(t):=\sqrt{\int_\Omega\Phi(u_h(x, t))dx+B}
 $$
 where  $B$ is so chosen that this quantity is well-defined, and consider  the following enlarged system:
find $(u_h(\cdot, t), q_h(\cdot, t)) \in V_h  \times V_h $ and $r=r(t)$ such that
\begin{subequations}\label{SemiDG++}
\begin{align}
(u_{ht}, \phi) = &- A(q_h,\phi) -r\left(b(u_h),\phi \right),\\
(q_h, \psi) = & A(u_h,\psi),\\
r_t = & \frac{1}{2}\int_\Omega b(u_h) u_{ht}dx,
\end{align}
\end{subequations}
for all $\phi, \psi \in V_h $. Here we use the notation
\begin{align}\label{hw}
 b(w(\cdot))= \frac{\Phi'(w(\cdot))}{\sqrt{\int_\Omega\Phi(w(x))dx+B} }.
\end{align}
The initial data for the above scheme is chosen as
$$
u_h(x, 0)=\Pi u_0(x), \quad r(0)=\sqrt{\int_\Omega\Phi(u_0(x))dx+B},
$$
where $\Pi$ denotes the piecewise $L^2$ projection into $V_h$.

One can verify that  a modified energy of form
\begin{align}\label{ee}
E(u_h, q_h, r)=\frac{1}{2}\int_{\Omega} q_h^2 dx+r^2 =\mathcal E(u_h, q_h)+B
\end{align}
satisfies the following dissipation inequality
\bqs
\frac{d}{dt}E(u_h, q_h, r) = -\int_{\Omega}|u_{ht}|^2dx \leq 0.
\eqs
Using the Euler-forward time discretization, we obtain the first order SAV-DG scheme:
find  $(u^{n}_h, q_h^{n}) \in V_h  \times V_h $ and $r^n=r(t^n)$ such that for any for $\phi, \psi \in V_h $,
\begin{subequations}\label{FPDGFull1st+}
\begin{align}
\left(  D_t u_h^n, \phi \right) = & - A(q_h^{n+1},\phi)-r^{n+1} \left( b(u_h^n), \phi \right), \\
(q_h^{n},\psi) = & A(u_h^{n}, \psi),\\
D_t r^n = & \frac{1}{2}\int_\Omega b(u_h^n) D_t u_h^n dx,
\end{align}
\end{subequations}
 The initial data
$
u_h^0=u_h(x, 0), \; r^0= r(0).
$
Here we used $D_tv^n=\frac{v^{n+1}-v^n}{\Delta t}$.

Reformulation (\ref{SemiDG++}) also allows for even higher order in time discretization.  To illustrate this we only consider a second order SAV-DG scheme: find $(u^{n}_h, q_h^{n}) \in V_h  \times V_h $ such that for for all $ \phi, \psi
\in V_h $,
\begin{subequations}\label{FPDGFull+}
	\begin{align}
	\left(  D_t u_h^n, \phi \right) = & - A(q_h^{n+1/2},\phi)-r^{n+1/2}\left(b(u^{n,*}_h),\phi \right),\\
	(q_h^{n}, \psi) = & A(u_h^{n},\psi),\\
    D_t r^n = & \frac{1}{2}\int_\Omega b(u^{n,*}_h) D_t u_h^ndx,
	\end{align}
\end{subequations}
where $v^{n+1/2}=(v^n+v^{n+1})/2$ for $v=u_h, q_h, r$, and $u^{n,*}_h$ is defined by
\begin{align}\label{u8}
u^{n, *}_h=& \frac{3}{2}u_h^n-\frac{1}{2}u_h^{n-1}.
\end{align}
Here instead of  $u_h^{n+1/2}$ we use $u^{n, *}_h$ to avoid the use of iteration steps in updating the numerical solution, while still maintaining second order accuracy in time. When $n=0$ in (\ref{u8}), we simply take  $u_h^{-1}=u_h^{0}$.

Both scheme (\ref{FPDGFull1st+}) and (\ref{FPDGFull+}) are unconditionally energy stable.
\begin{thm}\label{firstorder+}
(i) Scheme (\ref{FPDGFull1st+}) admits a unique solution $(u_h^{n}, q_h^{n})$, and for
 $E^n := E(u_h^n, q_h^n, r^n)$, we have
	\begin{align}\label{engdis1st+}
	E^{n+1} =  E^n - \frac{\| u_h^{n+1} - u_h^n\|^2}{\Delta t}-\frac{1}{2}\|q_h^{n+1} - q_h^n\|^2-|r^{n+1} - r^n|^2.
	\end{align}
for any $\Delta t>0$.	\\
(ii)  Scheme (\ref{FPDGFull+}) admits a unique solution,  and
\bq\label{engdis}
E^{n+1} =  E^n - \frac{\| u_h^{n+1} - u_h^n\|^2}{\Delta t}
\eq
for any $\Delta t >0$.
\end{thm}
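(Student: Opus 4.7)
For part (i), the plan is to derive the energy identity (\ref{engdis1st+}) directly, and to deduce existence and uniqueness from the associated coercivity. Three building blocks do the job. First, I would test (\ref{FPDGFull1st+}a) with $\phi = u_h^{n+1} - u_h^n$, so the left-hand side becomes $\Delta t^{-1}\|u_h^{n+1}-u_h^n\|^2$. Second, I would take the difference of (\ref{FPDGFull1st+}b) between levels $n+1$ and $n$ to get $(q_h^{n+1}-q_h^n,\psi) = A(u_h^{n+1}-u_h^n,\psi)$, and test it with $\psi = q_h^{n+1}$. Third, I would multiply (\ref{FPDGFull1st+}c) by $2\Delta t\, r^{n+1}$. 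Using the symmetry of $A(\cdot,\cdot)$ to identify $A(q_h^{n+1},u_h^{n+1}-u_h^n)$ with $A(u_h^{n+1}-u_h^n,q_h^{n+1})$, together with the polarization identities $2(q_h^{n+1}-q_h^n,q_h^{n+1}) = \|q_h^{n+1}\|^2 - \|q_h^n\|^2 + \|q_h^{n+1}-q_h^n\|^2$ and $2r^{n+1}(r^{n+1}-r^n) = |r^{n+1}|^2 - |r^n|^2 + |r^{n+1}-r^n|^2$, the three ingredients telescope into exactly (\ref{engdis1st+}).

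For unique solvability, I would first eliminate $r^{n+1}$ via (\ref{FPDGFull1st+}c), which yields a closed finite-dimensional linear system for $(u_h^{n+1}, q_h^{n+1})\in V_h\times V_h$. Testing its homogeneous version (zero right-hand side) with $\phi = u_h^{n+1}$ and $\psi = q_h^{n+1}$, and repeating the same symmetry step, gives
\[
\tfrac{1}{\Delta t}\|u_h^{n+1}\|^2 + \|q_h^{n+1}\|^2 + \tfrac{1}{2}\bigl(b(u_h^n), u_h^{n+1}\bigr)^2 = 0,
\]
which forces $u_h^{n+1} = q_h^{n+1} = 0$; existence then follows because the system is linear and finite-dimensional.

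For part (ii) the structure is identical, with three adjustments. I would test (\ref{FPDGFull+}a) again with $\phi = u_h^{n+1}-u_h^n$, take the difference of (\ref{FPDGFull+}b) and test it with $\psi = q_h^{n+1/2}$ using the midpoint identity $2(q_h^{n+1}-q_h^n,q_h^{n+1/2}) = \|q_h^{n+1}\|^2 - \|q_h^n\|^2$ (note the absence of a dissipative remainder here, which is precisely why (\ref{engdis}) has no extra $\|q_h^{n+1}-q_h^n\|^2$ or $|r^{n+1}-r^n|^2$ terms), and multiply (\ref{FPDGFull+}c) by $2\Delta t\, r^{n+1/2} = \Delta t(r^{n+1}+r^n)$ to obtain $|r^{n+1}|^2 - |r^n|^2 = r^{n+1/2}(b(u_h^{n,*}), u_h^{n+1}-u_h^n)$. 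Because $u_h^{n,*}$ depends only on known data, the scheme is still linear in $(u_h^{n+1}, q_h^{n+1}, r^{n+1})$, and an analogous homogeneous-system estimate using the three equations yields $\Delta t^{-1}\|u_h^{n+1}\|^2 + \tfrac{1}{2}\|q_h^{n+1}\|^2 + (r^{n+1})^2 = 0$, delivering uniqueness (and hence existence).

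The main technical point to watch is the symmetry of $A(\cdot,\cdot)$, on which the whole cancellation hinges; this is guaranteed by the penalty-free formulation (\ref{A0})--(\ref{bd+}), where both $A^0$ and $A^b$ are manifestly symmetric in their two arguments. A subtler bookkeeping point is that the SAV coupling factor $b(u_h^n)$ (resp.\ $b(u_h^{n,*})$) appearing in equation (a) must match exactly the one appearing in equation (c), since the telescoping of the three contributions proceeds through the common inner product $(b(\cdot), u_h^{n+1}-u_h^n)$. Any mismatch would spoil the key cancellation and ruin the unconditional stability; fortunately the SAV discretization is designed precisely to preserve this pairing.
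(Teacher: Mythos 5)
Your proposal is correct and follows essentially the same route as the paper's proof in Appendix A: test the first equation with the time increment $u_h^{n+1}-u_h^n$ (the paper uses $D_tu_h^n$, differing only by the factor $\Delta t$), convert $A(q_h^{n+1},\cdot)$ via the symmetry of $A$ and the differenced second equation, apply the polarization/midpoint identities to the $q_h$ and $r$ terms, and obtain unique solvability (hence existence, by finite dimensionality) from the coercivity of the homogeneous system. Your coercivity identities for the homogeneous systems agree with the paper's after substituting $\tilde r=\tfrac12(b(u_h^n),\tilde u)$, so no further changes are needed.
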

The proof of this result is deferred to Appendix A.

Though SAV-DG schemes are linear and unconditionally energy stable, their numerical implementations cannot be handled as for the IEQ-DG schemes in \cite{LY19}.   To see this, we follow \cite{LY19} to rewrite (\ref{FPDGFull1st+}) into a closed linear system for $(u_h^{n+1}, q_h^{n+1})$ as
\bq\label{nonlin}
\ba
& \left(u_h^{n+1}, \phi \right)  +\frac{\Delta t}{2} \left( b(u_h^n), \phi \right)\left( b(u_h^n), u_h^{n+1} \right)
+\Delta t A(q_h^{n+1}, \phi)
\\
& =\left(u_h^{n}, \phi \right) +\frac{\Delta t}{2} \left(b(u_h^n), \phi \right)\left( b(u_h^n), u_h^{n} \right)-r^n \left(b(u_h^n), \phi \right),\\
& A(u_h^{n+1}, \psi) - (q_h^{n+1},\psi) =  0.
\ea
\eq
This linear system with a nonlocal term $(b(u_h^n), u_h^{n+1})$ has a symmetric yet dense and unstructured  coefficient matrix, and is rather expensive to solve.  

To get around this obstacle, we either return to the augmented system  with $(u_h^{n+1}, q_h^{n+1}, r^{n+1})$
as unknowns, or attempt to find a way to reduce the computational complexity of solving the reduced linear system (\ref{nonlin}).
For the former,  the linear system for the first order scheme is
\begin{subequations}\label{FPDGFull1st+*}
\begin{align}
& (\Delta t)^{-1} \left( u_h^{n+1}, \phi \right) + A(q_h^{n+1},\phi)+r^{n+1} \left( b(u_h^n), \phi \right)=(\Delta t)^{-1}\left( u_h^{n}, \phi \right), \\
& A(u_h^{n+1}, \psi)  - (q_h^{n+1},\psi)=0,\\
&\left(u_h^{n+1},  b(u_h^n)\right) -2 r^{n+1} = \left(u_h^{n},  b(u_h^n)\right)  -2 r^{n}.
\end{align}
\end{subequations}
Though the coefficient matrix of this linear system is also time-dependent, it is sparse and symmetric, hence still suitable for efficient computing. In fact, we use the conjugate gradient (CG) solver to solve this system with the 
 computational complexity of order $O(\mathcal{N})$;  while it is of order $O(\mathcal{N}^2)$ when solving the reduced system (\ref{nonlin}); see, e.g.,  \cite{S94}. 
 
As for the latter, we introduce a special procedure to pre-compute $r^{n+1}$ in order to substantially reduce
the total computational complexity. This treatment is interesting in its own sake.
The details will be presented in the next section.

\section{Pre-evaluation of the auxiliary variable and algorithms}

\subsection{Pre-evaluation of the auxiliary variable $r^{n+1}$}
We introduce an auxiliary linear system: find $(v_h, w_h) \in V_h \times V_h$ such that for $\forall \phi, \psi \in V_h$,
\bq\label{eq2DDGFull}
	\begin{aligned}
	&  \tau A(w_h, \phi)+(v_h, \phi)=(f_h,\phi),\\
	&(w_h, \psi) =  A(v_h,\psi),
	\end{aligned}
\eq
and define operator $(\mathcal{L}_h v, \psi)=A(v, \psi)$ for any $\psi\in V_h$.
We have the following.
\begin{lem} For any $\tau>0$ and $f_h$ given, system (\ref{eq2DDGFull}) admits a unique solution
$(v_h, w_h)$, given by
\bq\label{eq2inv}
v_h=\mathcal{B}_h(\tau)f_h, \quad w_h=\mathcal{L}_hv_h=\mathcal{L}_h \mathcal{B}_h(\tau) f_h.
\eq
Moreover,  the operator $\mathcal{B}_h(\tau)$ can be expressed as $(I+ \tau \mathcal{L}_h^2)^{-1}$,
with the following bounds:
\bq\label{Bff}
( f_h, \mathcal{B}_h(\tau) f_h)=\|\mathcal{B}_h(\tau) f_h\|^2 +\tau\|\mathcal{L}_h \mathcal{B}_h(\tau) f_h\|\geq 0.
\eq
$$
\|\mathcal{B}_h (\tau)f_h\| \leq\|f_h\|.
$$
\end{lem}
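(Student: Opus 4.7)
The plan is to convert the coupled system (\ref{eq2DDGFull}) into a single symmetric positive definite equation for $v_h$ alone, invert it in the finite-dimensional space $V_h$, and then read off the two asserted identities by suitable choices of test functions.

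First I would observe that the bilinear form $A(\cdot,\cdot)$ defined by (\ref{A0}) and (\ref{bd+}) is symmetric in its two arguments: both $A^0$ and $A^b$ are evidently symmetric. Consequently, the operator $\mathcal{L}_h:V_h\to V_h$ introduced by $(\mathcal{L}_h v,\psi)=A(v,\psi)$ is well defined (by Riesz representation in $V_h$) and self-adjoint on $V_h$ with the $L^2$ inner product. The second equation of (\ref{eq2DDGFull}) then immediately forces $w_h=\mathcal{L}_h v_h$, and substituting into the first equation yields, for every $\phi\in V_h$,
\begin{equation*}
(v_h,\phi)+\tau(\mathcal{L}_h^2 v_h,\phi)=(f_h,\phi),
\end{equation*}
i.e.\ the operator equation $(I+\tau\mathcal{L}_h^2)v_h=f_h$ in $V_h$.

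Next I would establish existence and uniqueness. Since $V_h$ is finite-dimensional, it suffices to show that $I+\tau\mathcal{L}_h^2$ has trivial kernel. If $(I+\tau\mathcal{L}_h^2)v_h=0$, testing against $v_h$ and using the self-adjointness of $\mathcal{L}_h$ gives
\begin{equation*}
\|v_h\|^2+\tau\|\mathcal{L}_h v_h\|^2=0,
\end{equation*}
so $v_h=0$ (and hence $w_h=\mathcal{L}_h v_h=0$). Therefore $\mathcal{B}_h(\tau):=(I+\tau\mathcal{L}_h^2)^{-1}$ is well defined on $V_h$, and $v_h=\mathcal{B}_h(\tau)f_h$, $w_h=\mathcal{L}_h\mathcal{B}_h(\tau)f_h$, proving (\ref{eq2inv}).

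For the two bounds I would test the first equation of (\ref{eq2DDGFull}) with $\phi=v_h$ and use $A(w_h,v_h)=(w_h,w_h)$ (which follows from the second equation with $\psi=w_h$). This gives
\begin{equation*}
(f_h,v_h)=(v_h,v_h)+\tau(w_h,w_h)=\|\mathcal{B}_h(\tau)f_h\|^2+\tau\|\mathcal{L}_h\mathcal{B}_h(\tau)f_h\|^2\ge 0,
\end{equation*}
which is the asserted identity (\ref{Bff}) (the right-hand side being $\ge 0$ as a sum of squares). Finally, dropping the nonnegative $\tau\|\mathcal{L}_h\mathcal{B}_h(\tau)f_h\|^2$ term and applying Cauchy--Schwarz on the left yields
\begin{equation*}
\|\mathcal{B}_h(\tau)f_h\|^2\le(f_h,\mathcal{B}_h(\tau)f_h)\le\|f_h\|\,\|\mathcal{B}_h(\tau)f_h\|,
\end{equation*}
so $\|\mathcal{B}_h(\tau)f_h\|\le\|f_h\|$. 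There is no real obstacle here; the only point that requires slight care is verifying the symmetry of $A$ (so that $\mathcal{L}_h$ is truly self-adjoint, which is what makes $(\mathcal{L}_h^2 v_h,v_h)=\|\mathcal{L}_h v_h\|^2$ legitimate); everything else is elementary linear algebra in $V_h$.
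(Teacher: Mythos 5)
Your proof is correct and follows essentially the same route as the paper's: test with $\phi=v_h$, $\psi=w_h$, use the symmetry of $A$ to get $(f_h,v_h)=\|v_h\|^2+\tau\|w_h\|^2$, deduce existence/uniqueness from the resulting a priori estimate in the finite-dimensional space $V_h$, and identify $\mathcal{B}_h(\tau)=(I+\tau\mathcal{L}_h^2)^{-1}$. The only cosmetic differences are that you obtain $\|\mathcal{B}_h(\tau)f_h\|\le\|f_h\|$ via Cauchy--Schwarz rather than Young's inequality, and you correctly write the last term of (\ref{Bff}) as $\tau\|\mathcal{L}_h\mathcal{B}_h(\tau)f_h\|^2$ (the exponent is missing in the paper's statement).
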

\begin{proof}Set $\phi=v_h$ and $\psi=w_h$ in (\ref{eq2DDGFull}) so that
$$
\|v_h\|^2+\tau \|w_h\|^2=(f_h,v_h) \leq \frac12 (\|f_h\|^2 +\|v_h\|^2).
$$
Hence
\bq\label{Bff+}
\|v_h\|^2+ 2 \tau \|w_h\|^2 \leq \|f_h\|^2.
\eq
This a priori estimate ensures both existence and uniqueness of the linear system (\ref{eq2DDGFull}).  Combining  two equations in  (\ref{eq2DDGFull}) we obtain
$$
(\tau \mathcal{L}_h^2+I)v_h=f_h.
$$
This implies that
$$
\mathcal{B}_h(\tau)=(I+\tau \mathcal{L}_h^2)^{-1},
$$
and (\ref{Bff}) follows from (\ref{Bff+}), completing the proof.
\end{proof}
Equipped with the above result, we can compute $r^{n+1}$ in advance for the SAV-DG scheme (\ref{FPDGFull1st+}).
\begin{thm} \label{thm4.1} Givn $(u_h^{n}, q_h^{n})$, scheme (\ref{FPDGFull1st+}) can be realized in
two steps:\\
(i) Determine $r^{n+1}$ by
\bq
\label{rn11st}
r^{n+1} =r^n - \frac{1}{2}(\Pi b(u_h^n), u_h^n)+\frac{1}{2} R^n,
\eq
where
\bq\label{nonloc1st}
\begin{aligned}
R^n = \frac{\left(b(u_h^n), \mathcal{B}_h(\Delta t)\xi^n\right)}
{1+ \frac{\Delta t}{2} \left(\Pi b(u_h^n), \mathcal{B}_h(\Delta t) \Pi b(u_h^n)\right)},
\end{aligned}
\eq
\bq
\label{cn1st}
\xi^n=u_h^n - \Delta t \Pi b(u_h^n) r^n+\frac{\Delta t }{2}\Pi b(u_h^n) \left(b(u_h^n), u_h^n\right);
\eq
(ii) with such obtained $r^{n+1}$ we solve the following linear system:
\bqs
\ba
\left(  D_t u_h^n, \phi \right) = & - A(q_h^{n+1},\phi)-\left( b(u_h^n), \phi \right)r^{n+1}, \\
(q_h^{n+1},\psi) = & A(u_h^{n+1}, \psi).
\ea
\eqs
\end{thm}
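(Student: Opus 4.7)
The plan is to derive, from the three equations of scheme (\ref{FPDGFull1st+}), a scalar equation that determines $r^{n+1}$ explicitly from the known data at step $n$, and then check that it matches (\ref{rn11st})--(\ref{cn1st}). First I would use (\ref{FPDGFull1st+}b) at time level $n+1$ together with the operator identity $(\mathcal{L}_h v, \psi) = A(v, \psi)$ to replace $A(q_h^{n+1}, \phi)$ by $(\mathcal{L}_h^2 u_h^{n+1}, \phi)$ in (\ref{FPDGFull1st+}a). Using that $(b(u_h^n),\phi) = (\Pi b(u_h^n), \phi)$ for every $\phi \in V_h$, equation (\ref{FPDGFull1st+}a) rewrites in $V_h$ as
\begin{equation*}
(I + \Delta t\, \mathcal{L}_h^2) u_h^{n+1} = u_h^n - \Delta t\, r^{n+1}\, \Pi b(u_h^n).
\end{equation*}
By the preceding lemma this operator is invertible with inverse $\mathcal{B}_h(\Delta t)$, so
\begin{equation*}
u_h^{n+1} = \mathcal{B}_h(\Delta t) u_h^n - \Delta t\, r^{n+1}\, \mathcal{B}_h(\Delta t)\, \Pi b(u_h^n).
\end{equation*}

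Next I would substitute this expression into equation (\ref{FPDGFull1st+}c), which after multiplying by $\Delta t$ reads $r^{n+1} - r^n = \tfrac12 (b(u_h^n), u_h^{n+1} - u_h^n)$, and again use that $(b(u_h^n), \cdot) = (\Pi b(u_h^n), \cdot)$ on $V_h$. This collapses to a scalar linear equation in $r^{n+1}$ of the form
\begin{equation*}
\left(1 + \tfrac{\Delta t}{2}(\Pi b(u_h^n), \mathcal{B}_h(\Delta t) \Pi b(u_h^n))\right) r^{n+1} = r^n - \tfrac12(\Pi b(u_h^n), u_h^n) + \tfrac12 (b(u_h^n), \mathcal{B}_h(\Delta t) u_h^n),
\end{equation*}
which can be solved directly since the coefficient on the left is strictly positive by (\ref{Bff}).

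The main task, and the place where a little care is required, is to recast the resulting closed-form expression so that it has exactly the shape stated in the theorem. To do so, I would subtract $(r^n - \tfrac12(\Pi b(u_h^n), u_h^n))$ from both sides, combine the numerator over the common denominator $1 + \tfrac{\Delta t}{2}(\Pi b(u_h^n), \mathcal{B}_h(\Delta t) \Pi b(u_h^n))$, and then pull $\mathcal{B}_h(\Delta t)$ out by linearity. The numerator organizes into $(b(u_h^n), \mathcal{B}_h(\Delta t)[u_h^n - \Delta t\, r^n\, \Pi b(u_h^n) + \tfrac{\Delta t}{2}\, \Pi b(u_h^n)(b(u_h^n), u_h^n)])$, which is exactly $(b(u_h^n), \mathcal{B}_h(\Delta t)\xi^n)$ with $\xi^n$ as in (\ref{cn1st}). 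This gives (\ref{rn11st}) via the quantity $R^n$ defined in (\ref{nonloc1st}).

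Once $r^{n+1}$ is known from step (i), step (ii) is immediate: equations (\ref{FPDGFull1st+}a)--(\ref{FPDGFull1st+}b) become a standard linear DG system for $(u_h^{n+1}, q_h^{n+1})$ whose right-hand side is fully determined, and its unique solvability follows from the same inversion of $I + \Delta t\,\mathcal{L}_h^2$ used above (which also coincides with the case $\tau = \Delta t$ of the lemma, now applied with $f_h = u_h^n - \Delta t\, r^{n+1}\,\Pi b(u_h^n)$). The only subtle point in the argument is the algebraic repackaging of the numerator into the form $\mathcal{B}_h(\Delta t)\xi^n$; everything else is a direct consequence of the invertibility established earlier and the reproducing property of $\Pi$ on $V_h$.
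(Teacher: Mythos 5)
Your proposal is correct and follows essentially the same route as the paper: invert $I+\Delta t\,\mathcal{L}_h^2$ via the lemma to write $u_h^{n+1}=\mathcal{B}_h(\Delta t)u_h^n-\Delta t\,r^{n+1}\mathcal{B}_h(\Delta t)\Pi b(u_h^n)$, reduce to a single scalar linear equation whose coefficient is positive by (\ref{Bff}), and identify the result with (\ref{rn11st})--(\ref{cn1st}). The only (cosmetic) difference is that the paper isolates the scalar $(b(u_h^n),u_h^{n+1})$, which is exactly $R^n$, so that $\xi^n$ appears without the final repackaging step you perform; your algebraic identification does check out.
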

\begin{proof} Denote $\mathcal{B}_h=\mathcal{B}_h(\Delta t)$.
From (\ref{FPDGFull1st+}a) we have
\bq\label{FPDGOP}
u_h^{n+1}=u_h^{n}-\Delta t \mathcal{L}_h^2 u_h^{n+1}-\Delta t \Pi b(u_h^n)r^{n+1}\in V_h,
\eq
which further gives
\begin{align*}
 u_h^{n+1} = \mathcal{B}_h u_h^n - \Delta t r^{n+1} \mathcal{B}_h \Pi b(u_h^n).
\end{align*}
Using (\ref{FPDGFull1st+}c), i.e.,
 \bq\label{rnmiddle}
  r^{n+1}=r^n + \frac{1}{2} \left(b(u_h^n),  u_h^{n+1}-u_h^n \right),
\eq
we see that
$$
u_h^{n+1} = \mathcal{B}_h \xi^n - \frac{\Delta t }{2}\mathcal{B}_h\Pi b(u_h^n) \left(b(u_h^n), u_h^{n+1}\right),
$$
where $\xi^n$ is given in (\ref{cn1st}).  Applying inner product against $b(u_h^n)$ gives
\bqs
\ba
 \left(1+  \frac{\Delta t}{2}\left( b(u_h^n), \mathcal{B}_h\Pi b(u_h^n)\right) \right) \left( b(u_h^n), u_h^{n+1}\right) = & \left( b(u_h^n), \mathcal{B}_h \xi^n \right).
\ea
\eqs
Since $(\mathcal{B}_h\Pi b(u_h^n),  b(u_h^n))=(\mathcal{B}_h\Pi b(u_h^n),  \Pi b(u_h^n))\geq 0$, hence,
$$
\left(b(u_h^n), u_h^{n+1}\right) = \frac{ ( b(u_h^n), \mathcal{B}_h \xi^n )}{1+  \frac{\Delta t}{2}\left( \Pi b(u_h^n), \mathcal{B}_h\Pi b(u_h^n)\right)}.
$$
This when inserted into (\ref{rnmiddle}) completes the proof.
\end{proof}

We can also compute $r^{n+1}$ in advance for the second order  SAV-DG scheme
(\ref{FPDGFull+}).
\begin{thm} Given $(u_h^{n}, q_h^{n})$, scheme (\ref{FPDGFull+}) can be realized in
two steps:\\
(i) Determine $r^{n+1/2}$ by
\bq\label{rn12nd}
r^{n+1/2} =r^n -\frac{1}{2}(\Pi b(u_h^{n, *}), u_h^n)+\frac{1}{2}R^{n, *},
\eq
where
\bq\label{nonloc2nd}
R^{n, *}=\frac{\left(b(u_h^{n, *}), \mathcal{B}_h(\Delta t/2)\xi^{n, *}\right)}{1+ \frac{\Delta t}{4}\left(\Pi b(u_h^{n, *}), \mathcal{B}_h(\Delta t/2) \Pi b(u_h^{n,*})\right)},
\eq
\bq\label{cn2nd}
\xi^{n, *}=u_h^n -\frac{1}{2}\Delta t  r^n \Pi b(u^{n,*}_h) +\frac{\Delta t }{4}\Pi b(u^{n,*}_h) \left(b(u^{n,*}_h), u_h^n\right);
\eq
(ii) with such obtained $r^{n+1/2}$ we solve the following linear system:
\bq
\ba
\left(  D_t u_h^n, \phi \right)
	= & - A(q_h^{n+1/2},\phi)-\left(b(u^{n,*}_h),\phi \right)r^{n+1/2},\\
	(q_h^{n}, \psi) = & A(u_h^{n},\psi).\\
\ea
\eq
\end{thm}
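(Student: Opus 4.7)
The plan is to mirror the proof of Theorem \ref{thm4.1}, reducing the coupled implicit system (\ref{FPDGFull+}) to a single scalar equation that pins down $r^{n+1/2}$, after which the remaining linear system in $(u_h^{n+1}, q_h^{n+1})$ decouples from the auxiliary variable. The one new ingredient over the first-order case is the Crank--Nicolson midpoint: invoking (\ref{FPDGFull+}b) at step $n$ together with the analogous relation defining $q_h^{n+1}$ at the next step gives $q_h^{n+1/2}=\mathcal{L}_h u_h^{n+1/2}$ by linearity, and combining with $u_h^{n+1}-u_h^n=2(u_h^{n+1/2}-u_h^n)$ recasts (\ref{FPDGFull+}a) as the operator identity
\bqs
(I+\tfrac{\Delta t}{2}\mathcal{L}_h^2)\,u_h^{n+1/2} = u_h^n - \tfrac{\Delta t}{2}\,\Pi b(u^{n,*}_h)\,r^{n+1/2}.
\eqs
Applying the previous lemma with $\tau=\Delta t/2$ then gives $u_h^{n+1/2}=\mathcal{B}_h(\Delta t/2)\bigl[u_h^n-\tfrac{\Delta t}{2}\Pi b(u^{n,*}_h)\,r^{n+1/2}\bigr]$, which still carries the unknown scalar $r^{n+1/2}$.

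The next step is to rewrite (\ref{FPDGFull+}c) in midpoint form. Using $r^{n+1/2}=(r^n+r^{n+1})/2$, the update line (\ref{FPDGFull+}c) becomes
\bqs
r^{n+1/2} = r^n + \tfrac{1}{2}\bigl(b(u^{n,*}_h),\,u_h^{n+1/2}-u_h^n\bigr).
\eqs
Substituting this into the previous formula for $u_h^{n+1/2}$ and collecting the known data produces exactly the quantity $\xi^{n,*}$ of (\ref{cn2nd}), leaving
\bqs
u_h^{n+1/2} = \mathcal{B}_h(\Delta t/2)\,\xi^{n,*} - \tfrac{\Delta t}{4}\,\mathcal{B}_h(\Delta t/2)\,\Pi b(u^{n,*}_h)\,\bigl(b(u^{n,*}_h),u_h^{n+1/2}\bigr).
\eqs
Pairing with $b(u^{n,*}_h)$ in $L^2$, and using $(b(u^{n,*}_h),\mathcal{B}_h(\Delta t/2)\Pi b(u^{n,*}_h)) = (\Pi b(u^{n,*}_h),\mathcal{B}_h(\Delta t/2)\Pi b(u^{n,*}_h))$ since $\mathcal{B}_h(\Delta t/2)$ maps into $V_h$, this reduces to one scalar equation whose solution is $(b(u^{n,*}_h),u_h^{n+1/2})=R^{n,*}$, i.e.\ formula (\ref{nonloc2nd}). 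Plugging back, and replacing $(b(u^{n,*}_h),u_h^n)$ by $(\Pi b(u^{n,*}_h),u_h^n)$ since $u_h^n\in V_h$, yields (\ref{rn12nd}); once $r^{n+1/2}$ is available, (\ref{FPDGFull+}a)--(\ref{FPDGFull+}b) become the linear system stated in part (ii).

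The main obstacle is essentially bookkeeping of the half-step factors: the parameter $\Delta t$ appearing throughout the first-order derivation is systematically replaced by $\Delta t/2$ in the implicit part and by $\Delta t/4$ in the contributions produced by substituting the expression for $r^{n+1/2}$. Well-posedness of the scalar step reduces to strict positivity of the divisor $1+\tfrac{\Delta t}{4}\bigl(\Pi b(u^{n,*}_h),\mathcal{B}_h(\Delta t/2)\Pi b(u^{n,*}_h)\bigr)$, which follows immediately from the nonnegativity bound (\ref{Bff}) applied with $\tau=\Delta t/2$, so the division leading to $R^{n,*}$ is legal and the two-step realization is justified.
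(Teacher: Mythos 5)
Your proposal is correct and follows essentially the same route as the paper: every step (the operator identity with $\tau=\Delta t/2$, the midpoint rewriting of the $r$-update, the substitution producing $\xi^{n,*}$, the pairing with $b(u^{n,*}_h)$, and the positivity of the divisor via the lemma's bound (\ref{Bff})) is the second-order analogue of the argument in Theorem \ref{thm4.1}. The only difference is presentational: the paper compresses this by observing that, written with the half-step forward difference $\tilde D_t$, scheme (\ref{FPDGFull+}) is formally identical to the first-order scheme with $b(u_h^n)\mapsto b(u_h^{n,*})$ and $\Delta t\mapsto\Delta t/2$, so the conclusion follows by citing Theorem \ref{thm4.1} directly, whereas you carry out the resulting algebra explicitly — both are valid and yield the same formulas.
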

\begin{proof} Scheme (\ref{FPDGFull+}) may be rewritten as
	\begin{align*}
	\left(  \tilde D_t u_h^{n+1/2}, \phi \right) = & - A(q_h^{n+1/2},\phi)- r^{n+1/2} \left(b(u^{n,*}_h),\phi \right),\\
	(q_h^{n}, \psi) = & A(u_h^{n},\psi),\\
    \tilde D_t r^{n+1/2} = & \frac{1}{2}\int_\Omega b(u^{n,*}_h) \tilde D_t u_h^{n+1/2}dx,
	\end{align*}
Here $\tilde D_t$ denotes a forward difference with time step $\Delta t/2$ so that
$$
\tilde D_t r^{n+1/2}=D_t r^n.
$$
This is the same form as the first order SAV-DG method with $b(u^{n}_h)$ replaced by $b(u^{n,*}_h)$
and time step $\Delta t$ replaced by $\Delta t/2$. Hence the claimed results follow directly from those in Theorem  \ref{thm4.1}.

\end{proof}

\subsection{Algorithms}
%

The details related to the scheme implementation are summarized in the following algorithms.

\begin{alg}
\label{algorithm1st}  Hybrid algorithm for the first order SAV-DG scheme (\ref{FPDGFull1st+}).
\begin{itemize}
  \item Step 1 (Initialization) From the given initial data $u_0(x)$
  \begin{enumerate}
    \item generate $u_h^0 =\Pi u_0(x) \in V_h $; 
    \item generate $r^0= \sqrt{\int_\Omega\Phi(u_0(x))dx+B}$, where $B$ is a priori chosen constant so that $\inf_v (\int_\Omega\Phi(v(x))dx+B)>0$.
  \end{enumerate}
  \item Step 2 (Evolution)
  \begin{enumerate}
    \item solve for $\Pi b(u_h^n)$ from $b(u_h^n)$;
    \item obtain $\mathcal{B}_h(\Delta t)\Pi b(u_h^n)=v_h$ by solving the linear system (\ref{eq2DDGFull}) with $f_h=\Pi b(u_h^n)$;
    \item obtain $\mathcal{B}_h(\Delta t) \xi^n=v_h$ by solving the linear system (\ref{eq2DDGFull}) with $f_h=\xi_n$ in (\ref{cn1st});
    \item calculate $R^n$ in (\ref{nonloc1st});
    \item calculate $r^{n+1}$ through (\ref{rn11st});
    \item solve the following linear system for $u_h^{n+1}, q_h^{n+1}$,
    \bqs
    \ba
    \left(u_h^{n+1}, \phi \right) + \Delta t A(q_h^{n+1},\phi) = & \left(u_h^{n}, \phi \right) - \Delta t \left( b(u_h^n), \phi \right)r^{n+1}, \\
    A(u_h^{n+1}, \psi)-(q_h^{n+1},\psi) = & 0.
    \ea
    \eqs
  \end{enumerate}
\end{itemize}
\end{alg}

\begin{alg}
\label{algorithm2nd} Hybrid algorithm for the second order SAV-DG scheme (\ref{FPDGFull+}).
\begin{itemize}
  \item Step 1 (Initialization) From the given initial data $u_0(x)$
  \begin{enumerate}
    \item generate $u_h^0 =\Pi u_0(x) \in V_h $; 
    \item solve for $q_h^0$ from (\ref{FPDGFull+}b) based on $u_h^0$;
    \item generate $r^0= \sqrt{\int_\Omega\Phi(u_0(x))dx+B}$, where $B$ is a priori chosen constant so that $\inf_v (\int_\Omega\Phi(v(x))dx+B)>0$.
  \end{enumerate}
  \item Step 2 (Evolution)
  \begin{enumerate}
    \item solve for $\Pi b(u^{n,*}_h)$ based on $b(u^{n,*}_h)$, where $u^{n,*}_h$ is defined in (\ref{u8});
    \item obtain $\mathcal{B}_h(\Delta t/2)\Pi b(u^{n,*}_h)=v_h$ by solving the linear system (\ref{eq2DDGFull}) with $f_h=\Pi b(u^{n,*}_h)$;
    \item obtain $\mathcal{B}_h(\Delta t/2) \xi^{n,*}=v_h$ by solving the linear system (\ref{eq2DDGFull}) with $f_h=\xi^{n,*}$ in (\ref{cn2nd});
    \item calculate $R^{n,*}$ in (\ref{nonloc2nd});
    \item calculate $r^{n+1/2}$ through (\ref{rn12nd});
    \item solve the following linear system for $u_h^{n+1/2}, q_h^{n+1/2}$,
    \bqs
    \ba
    \left(u_h^{n+1/2}, \phi \right)+(\Delta t/2) A(q_h^{n+1/2},\phi) = & \left(u_h^{n}, \phi \right)-
    (\Delta t/2) \left(b(u^{n,*}_h),\phi \right)r^{n+1/2},\\
	A(u_h^{n+1/2}, \psi)-  (q_h^{n+1/2},\psi) = & 0;\\
    \ea
    \eqs
    \item calculate $u_h^{n+1}=2u_h^{n+1/2}-u_h^{n}$.
  \end{enumerate}
\end{itemize}
\end{alg}
Note that each coefficient matrix of the linear system involved in Algorithm \ref{algorithm1st} and \ref{algorithm2nd} is symmetric, sparse and time-independent. The use of the CG  solver  
 for solving these linear systems induces the computational complexity of only order $O(\mathcal{N})$.

\section{Numerical examples}
In this section we numerically test both the spatial and temporal orders of convergence, and apply the second order fully discrete SAV-DG scheme (\ref{FPDGFull+}) to recover roll patterns and hexagonal patterns governed by the two dimensional Swift-Hohenberg equation and further verify the unconditional energy stability of the numerical solutions.

In our numerical tests, we take rectangular meshes. The $L^\infty$ and $L^2$ errors between the numerical solution $u_h^n(x, y)$ and the exact solution $u(t^n,x,y)$  evaluated to obtain experimental orders of convergence (EOC) are defined respectively by
$$
e_h^n = \max_{i}\max_{0\leq l \leq k+1}\max_{0\leq s \leq k+1} |u_h^n(\hat{x}^i_{l},\hat{y}^i_{s})-u(t^n,\hat{x}^i_{l},\hat{y}^i_{s})|
$$
and
$$
e_h^n = \left(\sum_{i} \frac{h^i_xh^i_y}{4} \sum_{l=1}^{k+1}\sum_{s=1}^{k+1} \omega_{l,s} |u_h^n(\hat{x}^i_{l},\hat{y}^i_{s})-u(t^n,\hat{x}^i_{l},\hat{y}^i_{s})|^2\right)^{1/2},
$$
where $\omega_{l,s}>0$ are the weights, and $(\hat{x}^i_{l},\hat{y}^i_{s})$ are the corresponding quadrature points.
The EOC at $T=n\Delta t=2n (\Delta t /2)$ in terms of mesh size $h= \max_i \{ h^i_x, h^i_y\}$ and time step $\Delta t$ are calculated respectively by
$$
\text{EOC}=\log_2 \left( \frac{e_h^n}{e_{h/2}^n}\right),  \quad \text{EOC}=\log_2 \left( \frac{e_h^n}{e_{h}^{2n}}\right).
$$
The Swift-Hohenberg equation is a special case of model equation (\ref{fourthPDE}) with $a=2$  and
$$
 \Psi(u)=\frac{1-\epsilon}{2}u^2 -\frac{g}{3}u^3 +\frac{u^4}{4},
$$
that is,
\begin{align}\label{sh}
u_t  = -\Delta^2 u -2 \Delta u +(\epsilon-1)u +gu^2 -u^3.
\end{align}
Here physical parameters  are $g\geq 0$ and $\epsilon$,  which together with the size of the domain play an important role in pattern selection;  see,  e.g.,  \cite{BPT01, PR04, MD14}.
Our numerical tests center on  this equation for which
$$
\Phi(u)=- \frac{\epsilon}{2}u^2 -\frac{g}{3}u^3 +\frac{u^4}{4}
$$
and $g \geq 0$ and $\epsilon>0$.
This function has double wells with two local minimal values at
$u_\pm=  \frac{g\pm\sqrt{g^2+4\epsilon}}{2}$ such that $\Phi'(u_\pm)=0$, and
\bqs
\Phi(u) \geq \min \{\Phi(u_\pm)\}=
\min_{v=u_\pm} \left( -\frac{1}{12} \left(gv(g^2+4\epsilon)+\epsilon(g^2+3\epsilon) \right) \right)=-a,
\eqs
so it suffices to choose the method parameter $B=a |\Omega|$. In all numerical examples $a<1$, so we simply take $B= |\Omega|$ for all cases.


\begin{example}\label{Ex1dAccS} (Spatial Accuracy Test)
Consider the Swift-Hohenberg equation (\ref{sh}) with an added source of form
$$
f(x,y, t)=- \varepsilon v -gv^2+ v^3, \quad v: =e^{-t/4}\sin(x/2)\sin(y/2),
$$
on $\Omega$, subject to initial data
\bq\label{initex1}
u_0(x,y) = \sin(x/2)\sin(y/2), \quad (x, y) \in \Omega.
\eq
This problem has an explicit solution
\bq\label{uexact}
u(x,y,t) = e^{-t/4}\sin(x/2)\sin(y/2), \quad (x, y) \in \Omega.
\eq
This example is used to test the spatial accuracy, using polynomials of degree $k$ with $k =1,\ 2,\ 3$ on 2D rectangular meshes. 
In the second-order SAV-DG scheme (\ref{FPDGFull+}), we need to add
$$
\frac{1}{2}\left(f(\cdot, t^{n+1}, \phi)+f(\cdot, t^{n}, \phi)\right),
$$
 to the right hand side of (\ref{FPDGFull+}a).  \\

\noindent\textbf{Test case 1.}  We take $\varepsilon=0.025, g=0$,  and domain $\Omega=[-2\pi, 2\pi]^2$ with periodic boundary conditions.  Both errors and orders of convergence at $T=0.01$ are reported in Table \ref{tab2dacc}. These results confirm the $(k+1)$th orders of accuracy in $L^2, L^\infty$ norms.

\begin{table}[!htbp]\tabcolsep0.03in
\caption{$L^2, L^\infty$ errors and EOC at $T = 0.01$ with mesh $N\times N$.}
\begin{tabular}[c]{||c|c|c|c|c|c|c|c|c|c||}
\hline
\multirow{2}{*}{$k$} & \multirow{2}{*}{$\Delta t$}&   \multirow{2}{*}{ } & N=8 & \multicolumn{2}{|c|}{N=16} & \multicolumn{2}{|c|}{N=32} & \multicolumn{2}{|c||}{N=64}  \\
\cline{4-10}
& & & error & error & order & error & order & error & order\\
\hline
\multirow{2}{*}{1}  & \multirow{2}{*}{1e-3} & $\|u-u_h\|_{L^2}$ &  3.18621e-01 & 8.28732e-02 & 1.94 & 2.02935e-02 & 2.03 & 5.04416e-03 & 2.01  \\
\cline{3-10}
 & & $\|u-u_h\|_{L^\infty}$  & 1.38452e-01 & 3.83881e-02 & 1.85 & 9.61389e-03 & 2.00 & 2.40363e-03 & 2.00  \\
\hline
\hline
\multirow{2}{*}{2}  & \multirow{2}{*}{1e-4} & $\|u-u_h\|_{L^2}$ & 6.96867e-02 & 1.49828e-02 & 2.22 & 2.01641e-03 & 2.89 & 2.56761e-04 & 2.97  \\
\cline{3-10}
 & & $\|u-u_h\|_{L^\infty}$  & 2.41046e-02 & 2.94730e-03 & 3.03 & 4.02470e-04 & 2.87 & 5.14111e-05 & 2.97  \\
 \hline
\hline
\multirow{2}{*}{3} & \multirow{2}{*}{1e-5}  & $\|u-u_h\|_{L^2}$ & 1.19940e-02 & 1.13110e-03 & 3.41 & 7.72013e-05 & 3.87 & 5.01113e-06 & 3.95  \\
\cline{3-10}
 & & $\|u-u_h\|_{L^\infty}$   & 3.85634e-03 & 3.68735e-04 & 3.39 & 2.43503e-05 & 3.92 & 1.53912e-06 & 3.98  \\
\hline
\end{tabular}\label{tab2dacc}
\end{table}

\noindent\textbf{Test case 2.} We take $\varepsilon=0.025, g=0.05$,  domain $\Omega=[-\pi, 3\pi]^2$ with   boundary condition $\partial_\nu u = \partial_\nu \Delta u = 0, \ (x,y) \in \partial \Omega$.  Both errors and orders of convergence at $T=0.01$ are reported in Table \ref{tab2daccNeu}. These results also show that $(k+1)$th orders of accuracy in both $L^2$ and $L^\infty$ norms are obtained.

\begin{table}[!htbp]\tabcolsep0.03in
\caption{$L^2, L^\infty$ errors and EOC at $T = 0.01$ with mesh $N\times N$.}
\begin{tabular}[c]{||c|c|c|c|c|c|c|c|c|c||}
\hline
\multirow{2}{*}{$k$} & \multirow{2}{*}{$\Delta t$}&   \multirow{2}{*}{ } & N=8 & \multicolumn{2}{|c|}{N=16} & \multicolumn{2}{|c|}{N=32} & \multicolumn{2}{|c||}{N=64}  \\
\cline{4-10}
& & & error & error & order & error & order & error & order\\
\hline
\multirow{2}{*}{1}  & \multirow{2}{*}{1e-3} & $\|u-u_h\|_{L^2}$ &  3.18621e-01 & 8.28732e-02 & 1.94 & 2.02935e-02 & 2.03 & 5.04416e-03 & 2.01  \\
\cline{3-10}
 & & $\|u-u_h\|_{L^\infty}$  & 1.38452e-01 & 3.83886e-02 & 1.85 & 9.61391e-03 & 2.00 & 2.40363e-03 & 2.00  \\
\hline
\hline
\multirow{2}{*}{2}  & \multirow{2}{*}{1e-4} & $\|u-u_h\|_{L^2}$ & 6.96867e-02 & 1.49828e-02 & 2.22 & 2.01641e-03 & 2.89 & 2.56762e-04 & 2.97  \\
\cline{3-10}
 & & $\|u-u_h\|_{L^\infty}$  & 2.41054e-02 & 2.94731e-03 & 3.03 & 4.02470e-04 & 2.87 & 5.14110e-05 & 2.97  \\
 \hline
\hline
\multirow{2}{*}{3} & \multirow{2}{*}{1e-5}  & $\|u-u_h\|_{L^2}$ & 1.19940e-02 & 1.13110e-03 & 3.41 & 7.72042e-05 & 3.87 & 5.05657e-06 & 3.93  \\
\cline{3-10}
 & & $\|u-u_h\|_{L^\infty}$   & 3.85659e-03 & 3.68738e-04 & 3.39 & 2.43504e-05 & 3.92 & 1.53917e-06 & 3.98  \\
\hline
\end{tabular}\label{tab2daccNeu}
\end{table}

\end{example}

\begin{example}\label{Ex1dAccT} (Temporal Accuracy Test)  Consider the Swift-Hohenberg equation with source term given as in Example \ref{Ex1dAccS}.  We take $\varepsilon=0.025$ and $g=0$, and domain $\Omega= [-4\pi, 4\pi]^2$ with periodic boundary conditions, subject to initial data
\bq\label{initex12}
u_0(x,y) = \sin(x/4)\sin(y/4).
\eq
Its exact solution is given by
$$
u(x,y,t) = e^{-49t/64}\sin(x/4)\sin(y/4), \quad (x, y) \in \Omega.
$$
We compute the numerical solutions using the SAV-DG schemes (\ref{FPDGFull1st+}) and (\ref{FPDGFull+}) based on $P^2$ polynomials with time steps $\Delta t=2^{-m}$ for $2\leq m\leq 5$ and mesh size $64\times 64$. The $L^2, L^\infty$ errors and orders of convergence at $T=2$ are shown in Table \ref{timeacc}, and these results confirm that DG schemes (\ref{FPDGFull1st+}) and (\ref{FPDGFull+}) are first order and second order in time, respectively.

\begin{table}[!htbp]\tabcolsep0.03in
\caption{$L^2, L^\infty$ errors and EOC at $T = 2$ with time step $\Delta t$.}
\begin{tabular}[c]{||c|c|c|c|c|c|c|c|c|c||}
\hline
\multirow{2}{*}{Scheme} & \multirow{2}{*}{Mesh}&   \multirow{2}{*}{ } & $\Delta t=2^{-2}$ & \multicolumn{2}{|c|}{$\Delta t=2^{-3}$} & \multicolumn{2}{|c|}{$\Delta t=2^{-4}$} & \multicolumn{2}{|c||}{$\Delta t=2^{-5}$}  \\
\cline{4-10}
& & & error & error & order & error & order & error & order\\
\hline
\multirow{2}{*}{(\ref{FPDGFull1st+})}  & \multirow{2}{*}{$64\times64$} & $\|u-u_h\|_{L^2}$ &  3.05892e-01 & 1.58442e-01 & 0.95 & 8.07023e-02 & 0.97 & 4.07442e-02 & 0.99  \\
\cline{3-10}
 & & $\|u-u_h\|_{L^\infty}$  & 1.75153e-02 & 9.09087e-03 & 0.95 & 4.64080e-03 & 0.97 & 2.34881e-03 & 0.98  \\
\hline
\hline
\multirow{2}{*}{(\ref{FPDGFull+})}  & \multirow{2}{*}{$64\times64$} & $\|u-u_h\|_{L^2}$ & 4.17744e-02 & 8.14437e-03 & 2.36 & 1.74312e-03 & 2.22 & 3.98404e-04 & 2.13  \\
\cline{3-10}
 & & $\|u-u_h\|_{L^\infty}$  & 4.01428e-03 & 7.92985e-04 & 2.34 & 1.46602e-04 & 2.44 & 3.60847e-05 & 2.02  \\
 \hline
\end{tabular}\label{timeacc}
\end{table}

\end{example}

\begin{example} We consider the Swift-Hehenberg equation with the parameters in Example \ref{Ex1dAccT}. Here we compare the computational complexity of (\ref{nonlin}), (\ref{FPDGFull1st+*}) and Algorithm \ref{algorithm1st} in implementing the first order SAV-DG scheme (\ref{FPDGFull1st+}). We use $P^1$ polynomials with time step $\Delta t=10^{-2}$ and meshes $N\times N$. The total CPU time  and the orders of the CPU time relative to the number of unknowns  are presented in Table \ref{tab2daccNeu32}.

Let $\mathcal{N}=6N^2+1$  be the total number of unknowns. The results tell us that the computational complexity of (\ref{nonlin}) is $O(\mathcal{N}^2)$, but only $O(\mathcal{N})$ for (\ref{FPDGFull1st+*}) and Algorithm \ref{algorithm1st}.
The key for the $O(\mathcal{N})$ complexity lies in the sparsity of the coefficient matrix, however, (\ref{FPDGFull1st+*})
solves a larger system, and Algorithm \ref{algorithm1st} involves a pre-evaluation procedure. Still Algorithm \ref{algorithm1st} appears best among all three methods.

\begin{table}[!htbp]\tabcolsep0.03in
\caption{The CPU time in seconds with respect to meshes $N\times N$ at $T=0.1$.}
\begin{tabular}[c]{||c|c|c|c|c|c|c|c||}
\hline
\multirow{2}{*}{method} & N=8 & \multicolumn{2}{|c|}{N=16} & \multicolumn{2}{|c|}{N=32} & \multicolumn{2}{|c||}{N=64}  \\
\cline{2-8}
& CPU time & CPU time & order & CPU time & order & CPU time & order \\
\hline
(\ref{nonlin}) & 3.46 & 16.58 & 1.13 & 157.58 & 1.63 & 2687.80 & 2.05\\
\hline
(\ref{FPDGFull1st+*}) & 2.70 & 9.79 & 0.93 & 38.10 & 0.98 & 155.07 & 1.01 \\
\hline
Algorithm \ref{algorithm1st} & 2.10 & 7.39 & 0.91 & 28.49 & 0.97 & 116.00 & 1.01 \\
\hline
\end{tabular}\label{tab2daccNeu32}
\end{table}
\end{example}

\section{Concluding remarks} For a class of fourth order gradient flows, integration of the spatial discretization based on the penalty-free DG method  introduced in \cite{LY18} with the temporal discretization based on the SAV approach introduced in \cite{SXY18} to handle nonlinear terms led us to SAV-DG schemes. Such schemes inherit the energy dissipation property of the continuous equation irrespectively of the mesh and time step sizes.
However, the resulting linear system involving unknowns $(u, q)$ only, where  $q$ is an approximation of $\mathcal{L}=-\left(\Delta+\frac{a}{2}\right)u$, is rather expensive to solve due to the dense coefficient matrix.
In this paper, we have developed hybrid SAV-DG algorithms in two steps: we (i) provide a procedure to pre-evaluate the auxiliary variable $r^{n+1}$ in the piecewise polynomial space, and (ii) solve the resulting linear system with the obtained $r^{n+1}$. This procedure reduced the computational complexity of the CG solver to $O(\mathcal{N})$ from $O(\mathcal{N}^2)$; here $\mathcal{N}$ is the total number of unknowns.
We also presented several numerical examples to assess the performance of the hybrid SAV-DG algorithms in terms of accuracy and energy stability. Also the cost of the hybrid SAV-DG is comparable to that for solving the augmented system involving $(u, q, r)$, with the hybrid SAV-DG performing better as evidenced by our numerical results.

\section*{Acknowledgments}
 This research was supported by the National Science Foundation under Grant DMS1812666.

\bigskip
\appendix

\section{Proofs of energy dissipation laws}.
\begin{proof}
(i) We first prove (\ref{engdis1st+}). From (\ref{FPDGFull1st+}b), it follows
\begin{align}
(D_tq_h^n, \psi)=A(D_tu_h^n, \psi).
\end{align}
Taking $\psi=q_h^{n+1}$ and $\phi=D_t u_h^n$ in (\ref{FPDGFull1st+}a), when combined with (\ref{FPDGFull1st+}c) we have
\bq\label{exist1st}
\ba
-\|D_tu_h^n\|^2  =& (D_t q_h^{n}, q_h^{n+1}) +(b(u_h^n), D_tu_h^n)r^{n+1}\\
 =& \frac{1}{2} D_t \|q_h^n\|^2 +\frac{\Delta t }{2} \|D_t q_h^n\|^2 +2 r^{n+1}D_t r^n\\
 = &\frac{1}{2} D_t \|q_h^n\|^2 +\frac{\Delta t }{2} \|D_t q_h^n\|^2 +  D_t |r^n|^2 + \Delta t |D_t r^n|^2,
\ea
\eq
which leads to the desired equality (\ref{engdis1st+}).

Next we show the uniqueness of the SAV-DG scheme (\ref{FPDGFull1st+}). Let  $(\tilde u, \tilde q, \tilde r)$ be the difference of two possible solutions at $t=t_{n+1}$, then (\ref{exist1st}) is equivalent to
$$
\frac{1}{\Delta t}\|\tilde u\|^2+\|\tilde q\|^2+2|\tilde r|^2=0,
$$
hence we must have $(\tilde u, \tilde q, \tilde r)=(0, 0, 0)$,
leading to the uniqueness of the linear system (\ref{FPDGFull1st+}), hence its existence  since for a linear system in finite dimensional space, existence is equivalent to its uniqueness.

(ii) We first prove (\ref{engdis}). From (\ref{FPDGFull+}b), it  follows
\bq
(D_t q_h^n, \psi) = A(D_t u_h^n, \psi).
\eq
Taking $\psi=q_h^{n+1/2}$ and $\phi=D_t u_h^n$ in (\ref{FPDGFull+}a), when combined with (\ref{FPDGFull+}c) we have
\begin{align*}
-\|D_tu_h^n\|^2 & =  (D_t q_h^{n}, q_h^{n+1/2}) + (b(u_h^{n,*})r^{n+1/2}, D_tu_h^n) =\frac{1}{2} D_t \|q_h^n\|^2 + D_t |r^n|^2.
\end{align*}
Multiplying by $\Delta t$ on both sides of this equality leads to (\ref{engdis}).

Similar to (i), the existence of the SAV-DG scheme (\ref{FPDGFull+}) is equivalent to its uniqueness,
we let  $(\tilde u, \tilde q, \tilde r)$ be the difference of two possible solutions at $t=t_{n+1}$ again, then a similar analysis yields
$$
\frac{1}{\Delta t}\|\tilde u\|^2+\frac{1}{2}\|\tilde q\|^2+|\tilde r|^2=0,
$$
hence we must also have $(\tilde u, \tilde q, \tilde r)=(0, 0, 0)$, leading to the uniqueness of the scheme (\ref{FPDGFull+}).
\end{proof}

\end{document}